\newtheorem{theorem}{Theorem}[section]
\newtheorem{lemma}[theorem]{Lemma}
\newtheorem{corollary}[theorem]{Corollary}
\title[Convexity and a sum-product type estimate]
{Convexity and a sum-product type estimate}
\author{Liangpan Li and Oliver Roche-Newton}
\address{Department of Mathematical Sciences,
Loughborough University, LE11 3TU, UK}
 \email{liliangpan@gmail.com}
\address{Department of Mathematics, University Walk, Bristol, BS8 1TW, UK}
 \email{maorn@bristol.ac.uk}
\subjclass[2000]{11B75}
\keywords{sumset, difference set, productset, convexity, additive
energy}
\date{}
\begin{document}

\begin{abstract}
In this paper we further study the relationship between convexity
and additive growth, building on the work of Schoen and Shkredov
(\cite{SS}) to get some improvements to earlier results of Elekes,
Nathanson and Ruzsa (\cite{ENR}). In particular, we show that for
any finite set $A\subset{\mathbb{R}}$ and any strictly convex or
concave function $f$,
\[|A+f(A)|\gg{\frac{|A|^{24/19}}{(\log|A|)^{2/19}}}\] and
\[\max\{|A-A|,\ |f(A)+f(A)|\}\gg{\frac{|A|^{14/11}}{(\log|A|)^{2/11}}}.\]
For the latter of these inequalities, we go on to consider the
consequences for a sum-product type problem.

\end{abstract}
\maketitle
\begin{section}{Introduction}
Given a finite set $A\subset{\mathbb{R}}$, the elements of $A$ can be labeled in ascending order, so that
$$a_1<a_2<\cdots<a_n.$$
$A$ is said to be \textit{convex}, if
$$a_i-a_{i-1}<a_{i+1}-a_i,$$
for all $2\leq{i}\leq{n-1}$, and it was proved by Elekes, Nathanson
and Ruzsa (\cite{ENR}) that $|A\pm{A}|\geq{|A|^{3/2}}$, an estimate
which stood as the best known for a decade, under various guises.
Schoen and Shkredov (\cite{SS}) recently made significant progress
by proving that for any convex set $A$,
$$|A-A|\gg{\frac{|A|^{8/5}}{(\log|A|)^{2/5}}},$$
and
$$|A+A|\gg{\frac{|A|^{14/9}}{(\log|A|)^{2/3}}}.$$
See \cite{SS} and the references contained within for more details on this problem and its history.

In \cite{ENR}, a number of other results were proved connecting
convexity with large sumsets. In particular, it was shown that, for
any convex or concave function $f$ and any finite set
$A\subset{\mathbb{R}}$,
\begin{equation}
\max\{|A+A|,|f(A)+f(A)|\}\gg{|A|^{5/4}}, \label{Aorf(A)}
\end{equation}
and
\begin{equation}
|A+f(A)|\gg{|A|^{5/4}}.
\label{A+f(A)}
\end{equation}
By choosing particularly interesting convex or concave functions
$f$, these results immediately yield interesting corollaries. For
example, if we choose $f(x)=\log x$, then (\ref{Aorf(A)})
immediately yields a sum-product estimate. Furthermore, if
$f(x)=1/x$, then (\ref{A+f(A)}) gives information about another
problem posed by Erd\H{o}s and Szemer\'{e}di (\cite{ES}) .

In this paper, the methods used by Schoen and Shkredov (\cite{SS}) are developed further in order to
improve on some other results from \cite{ENR}. In particular,
the bounds in (\ref{Aorf(A)}) and (\ref{A+f(A)}) are improved slightly,
in the form of the following results.
\begin{theorem}\label{theorem:main} Let $f$ be any continuous, strictly convex
 or concave function on the reals, and $A,C\subset\mathbb{R}$ be any finite sets
 such that $|A|\approx|C|$. Then
$$|f(A)+C|^6|A-A|^5\gg{\frac{|A|^{14}}{(\log|A|)^2}}.$$
In particular, choosing $C=f(A)$, this implies that
$$\max\{|f(A)+f(A)|,|A-A|\}\gg{\frac{|A|^{14/11}}{(\log|A|)^{2/11}}}.$$
\end{theorem}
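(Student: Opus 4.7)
The plan is to adapt the dyadic third-moment scheme of Schoen and Shkredov \cite{SS} to a setting in which convexity is supplied by the function $f$ rather than by any structural property of the set $A$. Write $S=f(A)+C$ and $D=A-A$. The key consequence of strict convexity is that for every $d\neq 0$ the function $a\mapsto f(a)-f(a-d)$ is strictly monotone, hence injective. Equivalently, the map
\[
\phi\colon\{(a,a')\in A\times A:a\neq a'\}\to D\times (f(A)-f(A)),\qquad (a,a')\mapsto (a-a',\,f(a)-f(a'))
\]
is injective, so $|\operatorname{Im}\phi|=|A|(|A|-1)$ and, for each fixed $d\in D$, exactly $r_{A-A}(d)$ values $s\in f(A)-f(A)$ satisfy $(d,s)\in\operatorname{Im}\phi$. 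Since $f(a)-f(a')=(f(a)+c)-(f(a')+c)$ for every $c\in C$, every such $s$ also lies in $S-S$ with the multiplicative transfer $r_{S-S}(s)\ge |C|\,r_{f(A)-f(A)}(s)$, and Pl\"unnecke's inequality moreover gives $|f(A)-f(A)|\le |S|^2/|C|$.

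I would then dyadically decompose $D$ along the level sets of $r_{A-A}$ and $S-S$ along the level sets of $r_{S-S}$, and estimate the contribution of each pair of dyadic scales to a suitable mixed third-moment energy by running the Schoen-Shkredov scheme, with the injectivity of $\phi$ taking the role played by convexity of $A$ in \cite{SS} and with the Pl\"unnecke bound above absorbing the passage from $f(A)-f(A)$ to $S-S$. Each of the two dyadic pigeonholes costs one factor of $\log|A|$, which is what produces the $(\log|A|)^2$ in the conclusion; a final Cauchy-Schwarz together with the counting identity $|\operatorname{Im}\phi|\asymp |A|^2$ and a rearrangement should then yield the main inequality $|f(A)+C|^6|A-A|^5\gg |A|^{14}/(\log|A|)^2$.

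The corollary is then immediate: applying the main inequality with $C=f(A)$ (so that $|A|\asymp|C|$), and writing $M=\max\{|f(A)+f(A)|,|A-A|\}$, one has $M^{11}\ge |f(A)+f(A)|^6|A-A|^5\gg |A|^{14}/(\log|A|)^2$, giving $M\gg |A|^{14/11}/(\log|A|)^{2/11}$. The main obstacle I anticipate is the reimplementation of the third-moment scheme of \cite{SS} from only the injectivity of $\phi$: their argument exploits the fact that consecutive differences in a convex set are strictly increasing to obtain sharp level-set bounds of the form $|\{d:r_{A-A}(d)\ge\tau\}|\lesssim |A|^?/\tau^?$, whereas here we have only a global injectivity statement. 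The role of the auxiliary set $C$ should be precisely to furnish the additive slack needed to recover an analogous level-set bound in the $S$-coordinate, and balancing the resulting mixed energy against $|f(A)-f(A)|\le|S|^2/|C|$ is what I expect fixes the exponents as $6$ and $5$.
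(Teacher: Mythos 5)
Your high-level skeleton is reasonable (a Schoen--Shkredov-style third-moment argument driven by the convexity of $f$, followed by a rearrangement), and your derivation of the second inequality from the first is exactly the paper's. But there is a genuine gap at the core of the proposal, and you have in effect named it yourself: you never obtain the level-set bounds that make the exponents work, and the substitute you offer --- injectivity of the map $\phi(a,a')=(a-a',f(a)-f(a'))$ together with the Pl\"unnecke bound $|f(A)-f(A)|\le|S|^2/|C|$ --- is not strong enough. Injectivity of $\phi$ only encodes the fact that two translates of the graph of $f$ meet in at most one point; by itself it gives $L^1$-type counting information and cannot produce a bound of the shape $|\{x:\sigma_{f(A),C}(x)\ge\tau\}|\ll |A+B|^2|C|^2/(|B|\tau^3)$. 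To obtain that cubic decay in $\tau$ one must feed the ``two curves meet at most once'' property into the Szemer\'edi--Trotter incidence theorem, applied to the point set $(A+B)\times(f(A)+C)$ and the family of curves $G(f)+(b,c)$ with $(b,c)\in B\times C$. This is the paper's Lemma \ref{lemma:main}, and it is the engine of the whole proof; without it (or an equivalent incidence-theoretic input) the dyadic decomposition you describe has nothing to sum against, and there is no reason the exponents $6$ and $5$ should emerge.

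Concretely, the paper's route is short once Lemma \ref{lemma:main} is in place: H\"older gives $|A|^6\le E_{1.5}(A)^2|A-A|$; Lemma \ref{E1.5} with $B=-A$ gives $E_{1.5}(A)^2|A|^2\le E_3(A)E(A,A-A)$; and the level-set bound yields $E_3(A)\ll|f(A)+C|^2|A|\log|A|$ and $E(A,A-A)\ll|f(A)+C||A-A|^{3/2}$ (Corollary \ref{E3A}), which combine and square to the stated inequality. Note that only one logarithm enters (from the dyadic summation in the $E_3$ bound) and it is squared at the end --- not one logarithm per pigeonholing as you predict. Your proposed transfer $r_{S-S}(s)\ge|C|\,r_{f(A)-f(A)}(s)$ is also not justified as stated, since distinct triples $(a,a',c)$ can yield the same pair of elements of $S$; but that is secondary. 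The essential missing ingredient is the Szemer\'edi--Trotter bound on high-multiplicity elements of $f(A)+C$, and no amount of bookkeeping with $\phi$ and Pl\"unnecke will substitute for it.
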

\begin{theorem}\label{theorem:main2}Let $f$ be any continuous, strictly convex
 or concave function on the reals, and $A,C\subset\mathbb{R}$ be any finite sets
 such that $|A|\approx|C|$. Then
$$|f(A)+C|^{10}|A+A|^9\gg{\frac{|A|^{24}}{(\log|A|)^2}}.$$
In particular, choosing $C=f(A)$, this implies that
$$\max\{|f(A)+f(A)|,|A+A|\}\gg{\frac{|A|^{24/19}}{(\log|A|)^{2/19}}}.$$
\end{theorem}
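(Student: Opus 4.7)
The plan is to mirror the strategy of Schoen and Shkredov in the harder ``sumset'' direction, which in the case of an actual convex set $A$ yields $|A+A|\gg|A|^{14/9}/(\log|A|)^{2/3}$. The overall scheme is to sandwich the additive energy $E^+(B,C):=\sum_x r_{B+C}(x)^2$ (where $B=f(A)$) between a Cauchy--Schwarz lower bound in terms of $|f(A)+C|$ and an upper bound coming from the convexity of $f$ combined with the additive structure of $A$; balancing these two estimates will produce the desired inequality.

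Cauchy--Schwarz immediately yields $E^+(B,C)\geq |A|^2|C|^2/|f(A)+C|$. Rewriting
\[
E^+(B,C) \;=\; \sum_s r_{B-B}(s)\,r_{C-C}(s)
\]
and using that strict convexity of $f$ forces the map $(a_1,a_2)\mapsto(a_1-a_2,\,f(a_1)-f(a_2))$ to be injective off the diagonal -- because for each nonzero $t\in A-A$ the function $a\mapsto f(a)-f(a-t)$ is strictly monotone -- one may reparameterise this as
\[
E^+(B,C) \;=\; |A||C| \;+\; \sum_{\substack{a_1,a_2\in A\\ a_1\ne a_2}} r_{C-C}\bigl(f(a_1)-f(a_2)\bigr),
\]
and further decompose the sum according to the ``slope'' $t=a_1-a_2\in(A-A)\setminus\{0\}$.

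The crucial step is then to bring $|A+A|$ (rather than $|A-A|$, which comes directly out of the above reparameterisation and leads to Theorem~\ref{theorem:main}) into the upper bound. Here I would import the harder half of Schoen--Shkredov's argument: perform a dyadic decomposition of the representation function $r_{A+A}$, apply their energy estimate for convex sets, and use Pl\"unnecke--Ruzsa inequalities to control the auxiliary sumsets (such as $A+A+A$) that arise. Threading the dependence on $f$ and $C$ through this machinery, while keeping the cumulative logarithmic loss at $(\log|A|)^2$, is the main technical obstacle; this is what accounts for the exponents $10:9$ in Theorem~\ref{theorem:main2} (as opposed to $6:5$ in Theorem~\ref{theorem:main}), and for the specialisation recovering only $|A|^{24/19}$ (instead of the Schoen--Shkredov bound $|A|^{14/9}$) in the $\max$ statement.

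Combining the upper and lower bounds on $E^+(B,C)$, using $|A|\approx|C|$, and rearranging should yield $|f(A)+C|^{10}|A+A|^9\gg |A|^{24}/(\log|A|)^2$, from which the $\max$ bound follows by taking $C=f(A)$.
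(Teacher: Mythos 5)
Your proposal is an outline rather than a proof: the step you yourself flag as ``the main technical obstacle'' --- getting $|A+A|$ into the upper bound for the energy while controlling the logarithmic losses --- is precisely the content of the theorem, and it is left undone. Invoking ``the harder half of Schoen--Shkredov's argument'' plus Pl\"unnecke--Ruzsa does not close this gap, because the Schoen--Shkredov sumset argument is tailored to a convex \emph{set} $A$ (where $f$ is essentially the function enumerating $A$), and transplanting it to an arbitrary convex $f$ with an extra set $C$ is exactly what requires new estimates. None of the quantitative inputs that make the theorem work appear in your sketch: the Szemer\'edi--Trotter bound on popular sums, $|\{x:\sigma_{f(A),C}(x)\geq\tau\}|\ll |A+B|^2|C|^2/(|B|\tau^3)$, its consequences $E(A,A)\ll E_{1.5}(A)^{2/3}|f(A)+C|^{2/3}|A|^{1/3}$, $E_3(A)\ll|f(A)+C|^2|A|\log|A|$, $E(A,F)\ll|f(A)+C||F|^{3/2}$, and the third-moment inequality $E_{1.5}(A)^2|B|^2\leq E_3(A)^{2/3}E_3(B)^{1/3}E(A,A+B)$. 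Without these (or substitutes), the exponents $10$ and $9$ cannot be derived; you present them as the outcome of an unspecified computation.

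Your starting point is also aimed in a different direction from what actually works. You lower-bound $E^+(f(A),C)$ by $|A|^2|C|^2/|f(A)+C|$, so $|f(A)+C|$ enters through Cauchy--Schwarz and you must then manufacture $|A+A|$ in the upper bound --- the step that stalls. The working argument reverses the roles: apply Cauchy--Schwarz to $E(A,A)$ to get $E(A,A)^3\geq |A|^{12}/|A+A|^3$, so that $|A+A|$ appears for free, and then chain upper bounds $E(A,A)^3\ll E_{1.5}(A)^2|f(A)+C|^2|A|$, $E_{1.5}(A)^2|A|^2\leq E_3(A)E(A,A+A)$, and finally $E_3(A)\ll|f(A)+C|^2|A|\log|A|$ together with $E(A,A+A)\ll|f(A)+C||A+A|^{3/2}$, all of which come from the incidence bound and none of which require Pl\"unnecke--Ruzsa. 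Your reparameterisation of $E^+(f(A),C)$ via $r_{C-C}(f(a_1)-f(a_2))$ naturally produces difference-set information (as you note, this is the Theorem~\ref{theorem:main} direction) and does not lead to the stated inequality.
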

\begin{theorem}\label{theorem:main3}Let $f$ be any continuous, strictly convex
 or concave function on the reals, and $A\subset\mathbb{R}$ be any finite set. Then
$$|A+f(A)|\gg{\frac{|A|^{24/19}}{(\log|A|)^{2/19}}}.$$
\end{theorem}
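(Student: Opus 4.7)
The exponent $24/19$ in Theorem~\ref{theorem:main3} coincides with the max-bound deduced from Theorem~\ref{theorem:main2}, so one is tempted to derive the former as a corollary of the latter. However, a direct Pl\"unnecke--Ruzsa reduction falls short: writing $K = |A+f(A)|/|A|$, one has $|A+A|,\ |f(A)+f(A)| \le K^2|A|$, so Theorem~\ref{theorem:main2} with $C=f(A)$ gives $K^{38}|A|^{19} \gg |A|^{24}/(\log|A|)^2$, i.e.\ $|A+f(A)| \gg |A|^{43/38}/(\log|A|)^{1/19}$, and $43/38 < 24/19$. A direct argument is therefore needed.

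The plan is to re-run the argument behind Theorem~\ref{theorem:main2} with both sumset factors played by the single quantity $|A+f(A)|$. First, I would apply Theorem~\ref{theorem:main2} with $C = A$, which produces the factor $|f(A) + A|^{10} = |A+f(A)|^{10}$; it then remains to replace the $|A+A|^9$ factor by $|A+f(A)|^9$. The factor $|A+A|^9$ in Theorem~\ref{theorem:main2} arises from a Schoen--Shkredov-type third-moment estimate for $A$ alone, bounded via a Szemer\'edi--Trotter argument applied to the convex curve $\Gamma = \{(a, f(a)) : a \in A\}$. Substituting the bipartite pair $(A, f(A))$ for $A$ in this step should give an analogous inequality with $|A+A|^9$ replaced by $|A+f(A)|^9$. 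Combining the two factors yields
$$|A+f(A)|^{19} \gg \frac{|A|^{24}}{(\log|A|)^2},$$
which rearranges to the desired bound.

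The main obstacle is the adapted Schoen--Shkredov step: verifying that the third-moment inequality producing the factor $|A+A|^9$ in Theorem~\ref{theorem:main2} continues to hold with $|A+A|^9$ replaced by $|A+f(A)|^9$. This amounts to checking that the strict convexity of $f$---which forbids any three points of $\Gamma$ from being collinear---still supplies the key incidence bound when sums are taken with respect to the pair $(A, f(A))$ rather than $A$ alone. Provided the underlying Szemer\'edi--Trotter count is sufficiently uniform in the ``target'' set, this extension should be routine, and the proof then follows from the combined inequality displayed above.
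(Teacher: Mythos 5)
Your opening observation --- that a Pl\"unnecke--Ruzsa reduction from Theorem \ref{theorem:main2} only yields an exponent around $43/38$ and that a direct argument is needed --- is correct, and your high-level plan (re-run the machinery with $|A+f(A)|$ playing the role of both sumset factors) points in the right direction. But the proposal has a genuine gap, and it stems from a misdiagnosis of where the factor $|A+A|^9$ in Theorem \ref{theorem:main2} actually comes from. It does \emph{not} come from the third-moment estimate: the bound (\ref{1111}) reads $E_3(A)\ll|f(A)+C|^2|A|\log|A|$, involves no sumset of $A$ with itself, and contributes only to the $|f(A)+C|^{10}$ factor. The powers of $|A+A|$ enter at two other points: the initial Cauchy--Schwarz lower bound $E(A,A)\geq |A|^4/|A+A|$, and the mixed-energy bound $E(A,A+A)\ll|f(A)+C||A+A|^{3/2}$ obtained from (\ref{1010}) with $F=A+A$. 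Consequently the ``adapted third-moment step'' you flag as the main obstacle is not the step that needs changing, and verifying it would not complete the proof.

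The replacement that actually works is different in both places. At the start one uses $|A|^4/|A+f(A)|\leq E(A,f(A))$ together with the Cauchy--Schwarz bound $E(A,f(A))^2\leq E(A,A)E(f(A),f(A))$, which forces the argument to be run \emph{symmetrically} in $A$ and $f(A)$: one needs (\ref{99}) and (\ref{1111}) for $A$ (with $C=A$) alongside their duals (\ref{1212}) and (\ref{1414}) for $f(A)$ (with $C=f(A)$), two applications of Lemma \ref{E1.5} (with $B=f(A)$ and $B=A$ respectively), and finally (\ref{1010}) and (\ref{1313}) with $F=A+f(A)$, so that every sumset appearing in the final inequality is $|A+f(A)|$. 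The bookkeeping then gives $|A|^{24}\ll|A+f(A)|^{19}(\log|A|)^2$. None of this requires the new incidence-theoretic input you worry about --- the bipartite versions are already supplied by (\ref{ST1}), (\ref{ST2}) and the six estimates of Corollary \ref{E3A} --- but the proposal neither identifies the correct steps to modify nor carries out the modification, so as written it is a plan rather than a proof.
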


\subsection*{Applications to sum-product estimates}
By choosing  $f(x)=\log x$  and applying Theorems \ref{theorem:main}
and \ref{theorem:main2}, some interesting sum-product type results
can be specified, especially in the case when the productset is
small. A sum-product estimate is a bound on
$\max\{|A+A|,|A\cdot{A}|\}$, and it is conjectured that at least one
of these sets should grow to a near maximal size. Solymosi
(\cite{solymosi}) proved that
$\max\{|A+A|,|A\cdot{A}|\}\gg\frac{|A|^{4/3}}{(\log|A|)^{1/3}}$, and
this is the current best known bound. See \cite{solymosi} and the
references contained therein for more details on this problem and
its history.

In a similar spirit, one may conjecture that at least one of $|A-A|$ and $|A\cdot{A}|$ must be large,
and indeed this is somewhat true. In an earlier paper of Solymosi (\cite{solymosi2}) on sum-product estimates, it was proved that
$$\max\{|A+A|,|A\cdot{A}|\}\gg{\frac{|A|^{14/11}}{(\log|A|)^{3/11}}}.$$
It is easy to change the proof slightly in order to replace $|A+A|$
with $|A-A|$ in the above, however, in Solymosi's subsequent paper
on sum-product estimates, this was not the case. So,
$\max\{|A-A|,|A\cdot{A}|\}\gg{\frac{|A|^{14/11}}{(\log|A|)^{3/11}}}$
represents the current best known bound of this type. Applying
Theorem \ref{theorem:main} with $f(x)=\log x$, and noting that
$|f(A)+f(A)|=|A\cdot{A}|$, we get the following very marginal
improvement.
\begin{corollary}
\begin{equation}
|A\cdot{A}|^6|A-A|^5\gg{\frac{|A|^{14}}{(\log|A|)^2}}.
\label{differenceproduct}
\end{equation}
In particular, this implies that
$$\max\{|A\cdot{A}|, |A-A|\}\gg{\frac{|A|^{14/11}}{(\log|A|)^{2/11}}}.$$
\end{corollary}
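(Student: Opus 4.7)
The plan is to deduce this corollary as an essentially immediate consequence of Theorem \ref{theorem:main}, using the fact that $f(x)=\log x$ is strictly concave on $(0,\infty)$ and that logarithms turn products into sums. The only preliminary issue is that $\log$ is only defined on positive reals, so first I would reduce to the case $A\subset(0,\infty)$ by a trivial sign pigeonhole: partition $A$ into its positive and negative parts (discarding $0$ if present), take the majority part, and replace by $-A$ if necessary. This yields a subset $A'\subset(0,\infty)$ with $|A'|\geq(|A|-1)/2$, and we have $|A'\cdot A'|\leq|A\cdot A|$ and $|A'-A'|\leq|A-A|$, so the sought inequality for $A$ follows from the same inequality for $A'$ up to absolute constants absorbed into the $\gg$ notation.

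Next I would apply Theorem \ref{theorem:main} to $A'$ with $f(x)=\log x$ and $C=f(A')=\log A'$. The hypothesis $|A'|\approx|C|$ is trivially satisfied since $\log$ is injective, so $|C|=|A'|$. The key observation is then
\[
|f(A')+C|=|\log A'+\log A'|=\bigl|\log(A'\cdot A')\bigr|=|A'\cdot A'|,
\]
because $\log$ is a bijection between $(0,\infty)$ and $\mathbb{R}$ that sends multiplication to addition. Substituting into the conclusion of Theorem \ref{theorem:main} yields
\[
|A'\cdot A'|^{6}|A'-A'|^{5}\gg \frac{|A'|^{14}}{(\log|A'|)^{2}},
\]
which, combined with the reduction step above, gives \eqref{differenceproduct}.

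Finally, to derive the $\max$-statement, I would simply observe the elementary inequality
\[
\max\{|A\cdot A|,|A-A|\}^{11}\geq|A\cdot A|^{6}|A-A|^{5},
\]
so \eqref{differenceproduct} implies
\[
\max\{|A\cdot A|,|A-A|\}\gg\frac{|A|^{14/11}}{(\log|A|)^{2/11}},
\]
as claimed. There is no real obstacle here; the entire argument is a direct application of Theorem \ref{theorem:main} together with the standard $\log$–product correspondence, and the only mildly delicate point is the sign reduction, which is standard and cheap.
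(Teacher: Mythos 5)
Your proof is correct and follows essentially the same route as the paper, which likewise obtains the corollary by applying Theorem \ref{theorem:main} with $f(x)=\log x$, $C=f(A)$, and the identity $|f(A)+f(A)|=|A\cdot A|$. Your sign-pigeonhole reduction to $A\subset(0,\infty)$ is a point the paper leaves implicit, and you handle it correctly.
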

By applying Theorem \ref{theorem:main2} in the same way, we establish that
\begin{equation}
|A\cdot{A}|^{10}|A+A|^9\gg{\frac{|A|^{24}}{(\log|A|)^2}}.
\label{sumproduct}
\end{equation}
In the case when the product set is small, then
(\ref{differenceproduct}) and (\ref{sumproduct}) show that the
sumset and difference set grow non-trivially. This was shown in
\cite{Liconvex}, and here we get a more explicit version of the same
result.

\end{section}

\begin{section}{Notation and Preliminaries}

Throughout this paper, the symbols $\ll,\gg$ and $\approx$ are used to suppress constants. For example, $X\ll{Y}$
means that there exists some absolute constant $C$ such that $X<CY$. $X\approx{Y}$ means that $X\ll{Y}$ and $Y\ll{X}$. Also, all logarithms are to base 2.

For sets $A$ and $B$, let $E(A,B)$ be the additive energy of $A$ and
$B$, defined in the usual way. So, defining $\delta_{A,B}(s)$ (and
respectively $\sigma_{A,B}(s)$) to be the number of representations
of an element $s$ of $A-B$ (respectively $A+B$), and
$\delta_A(s)=\delta_{A,A}(s)$, we define
$$E(A,B)=\sum_s\delta_A(s)\delta_B(s)=\sum_s\delta_{A,B}(s)^2=\sum_s\sigma_{A,B}(s)^2.$$
Given a set $A\subset{\mathbb{R}}$ and some $s\in{\mathbb{R}}$,
define $A_s:=A\cap(A+s)$. A crucial observation to make is that
$|A_s|=\delta_A(s)$. In this paper, following \cite{SS}, the third
moment energy $E_3(A)$ will also be studied, where
$$E_3(A)=\sum_s\delta_{A}(s)^3.$$
In much the same way, we define
$$E_{1.5}(A)=\sum_s\delta_{A}(s)^{1.5}.$$
Later on, we will need the following lemma, which was proven in \cite{Liconvex}.

\begin{lemma}\label{E1.5}
Let $A,B$ be any sets. Then
$$E_{1.5}(A)^2\cdot|B|^2\leq E_3(A)^{2/3}\cdot E_3(B)^{1/3}\cdot E(A,A+B).$$
\end{lemma}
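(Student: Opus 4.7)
The plan is to reduce the inequality to a single Cauchy-Schwarz step followed by a Hölder step. First I would unfold one factor of $|A_s|^{1/2}$ in $E_{1.5}(A)$: since $|A_s|^{3/2}=|A_s|^{1/2}\cdot|\{(a,a')\in A^2:a-a'=s\}|$, one has
\[ E_{1.5}(A)\cdot|B| \;=\; \sum_{(a,a',b)\in A\times A\times B}|A_{a-a'}|^{1/2}. \]
The shift $(c,c'):=(a+b,\,a'+b)\in(A+B)^2$ preserves the difference, so grouping the triples according to the pair $(c,c')\in(A+B)^2$ they produce yields
\[ E_{1.5}(A)\cdot|B|=\sum_{(c,c')\in(A+B)^2} N(c,c')\,|A_{c-c'}|^{1/2}, \]
where $N(c,c'):=|\{b\in B:c-b\in A,\ c'-b\in A\}|$ records the multiplicity of the fibre above $(c,c')$.

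A single application of Cauchy-Schwarz now yields
\[ E_{1.5}(A)^2|B|^2 \;\leq\; \Bigl(\sum_{(c,c')\in(A+B)^2}N(c,c')^2\Bigr)\cdot\Bigl(\sum_{(c,c')\in(A+B)^2}|A_{c-c'}|\Bigr). \]
The right-hand factor is immediately identified with $E(A,A+B)$: for each $s$, the pairs $(c,c')\in(A+B)^2$ with $c-c'=s$ number exactly $\delta_{A+B}(s)$, so the sum becomes $\sum_s\delta_A(s)\delta_{A+B}(s)=E(A,A+B)$.

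The main step is then to establish the bound
\[ \sum_{(c,c')\in(A+B)^2}N(c,c')^2 \;\leq\; E_3(A)^{2/3}E_3(B)^{1/3}. \]
Expanding the square, $\sum_{(c,c')}N(c,c')^2$ counts sextuples $(a_1,a_2,a_1',a_2',b,b')\in A^4\times B^2$ with $a_1+b=a_1'+b'$ and $a_2+b=a_2'+b'$; introducing the common value $t:=b'-b$ decouples these conditions into $a_1-a_1'=t$, $a_2-a_2'=t$, $b'-b=t$, so the total count equals $\sum_t\delta_A(t)^2\delta_B(t)$. A three-term Hölder inequality with equal exponents $3,3,3$ then produces the desired bound, and combining it with the Cauchy-Schwarz inequality above completes the proof. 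The main piece of bookkeeping is the fibre count $N(c,c')$ and its expansion; everything else is Cauchy-Schwarz and Hölder applied essentially mechanically.
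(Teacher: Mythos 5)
Your proof is correct: the unfolding $E_{1.5}(A)|B|=\sum_{(c,c')\in(A+B)^2}N(c,c')|A_{c-c'}|^{1/2}$, the Cauchy--Schwarz step, the identification of $\sum_{(c,c')}|A_{c-c'}|$ with $E(A,A+B)$, and the evaluation $\sum_{(c,c')}N(c,c')^2=\sum_t\delta_A(t)^2\delta_B(t)$ followed by three-fold H\"older all check out. The paper itself does not prove this lemma (it cites \cite{Liconvex}), but your argument is essentially the standard Schoen--Shkredov-style proof used there, so there is nothing further to compare.
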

\end{section}

\begin{section}{Some consequences of the Szemer\'{e}di-Trotter theorem}

The main preliminary result is an upper bound on the number of high
multiplicity elements of a sumset, a result which comes from an
application of the Szemer\'{e}di-Trotter incidence theorem
(\cite{ST}).
\begin{theorem}
Let $\mathcal{P}$ be a set of points in the plane and $\mathcal{L}$
a set of curves such that any pair of curves intersect at most once.
Then,
\[|\{(p,l)\in{\mathcal{P}\times\mathcal{L}}:p\in{l}\}|\leq{4(|\mathcal{P}||\mathcal{L}|)^{2/3}+4|\mathcal{P}|+|\mathcal{L}|}.\]
\end{theorem}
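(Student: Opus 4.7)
The plan is to deploy Székely's crossing-number argument, which gives the cleanest proof of Szemer\'edi--Trotter type bounds for any family of curves where each pair meets at most once. The key ingredient is the crossing number inequality: any drawing of a simple graph $G=(V,E)$ in the plane with $|E|\geq 4|V|$ forces at least $|E|^3/(64|V|^2)$ crossings. The strategy is to convert the incidence configuration into such a drawing, read off an upper bound for the number of crossings from the curve hypothesis, and then invert the inequality to bound $|E|$, and hence the number of incidences $I$.

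First, I would discard any curve in $\mathcal{L}$ containing no point of $\mathcal{P}$; these contribute nothing. For each remaining curve $l$ carrying $k_l\geq 1$ incident points, I would order those points along $l$ and draw $k_l-1$ edges joining consecutive ones along the arc of $l$. This produces a graph $G$ drawn in the plane with vertex set $\mathcal{P}$ and exactly $|E|=I-|\mathcal{L}|$ edges (after discarding, with the discarded curves absorbed into the additive $|\mathcal{L}|$ term). One should check that $G$ is simple: two edges sharing both endpoints would force two distinct curves to share two points, violating the pairwise single-intersection hypothesis.

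Next, I would bound the crossings in this drawing. Edges from a single curve do not cross each other, since they are consecutive arcs along a simple curve. Edges from two distinct curves cross at most once, because the two underlying curves already meet in at most one point. Hence the number of crossings is at most $\binom{|\mathcal{L}|}{2}<|\mathcal{L}|^2/2$. If $|E|<4|\mathcal{P}|$, then $I<4|\mathcal{P}|+|\mathcal{L}|$ already. Otherwise the crossing number inequality yields
\[
\frac{|E|^3}{64|\mathcal{P}|^2}\leq\frac{|\mathcal{L}|^2}{2},
\]
so $|E|\leq 4|\mathcal{P}|^{2/3}|\mathcal{L}|^{2/3}$ and therefore $I\leq 4(|\mathcal{P}||\mathcal{L}|)^{2/3}+|\mathcal{L}|$. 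Combining the two regimes gives the stated estimate.

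The only subtle point, and the one I would check with care, is the justification that $G$ is genuinely simple so that the crossing number inequality applies in its cleanest form with the stated constant; the hypothesis that distinct curves share at most one point makes this immediate, but without it one would have to handle edge multiplicities and would lose constants. The remainder is a mechanical assembly of standard tools, and the argument is indifferent to whether ``curve'' means line, pseudoline, unit circle, or any other family satisfying the single-intersection property, which is exactly the level of generality needed for the applications to convex functions appearing later in the paper.
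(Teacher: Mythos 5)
The paper does not actually prove this statement: it is quoted as a black-box form of the Szemer\'edi--Trotter theorem, with a citation to \cite{ST}, and the authors only ever use it as an input to Lemma \ref{lemma:main}. So there is no ``paper proof'' to match against; what you have supplied is a genuine proof, and it is correct. It is the standard Sz\'ekely crossing-number argument: delete incidence-free curves, join consecutive incident points along each curve to get a drawing of a graph with $|E|\geq I-|\mathcal{L}|$ edges, observe that simplicity of $G$ and the bound of $\binom{|\mathcal{L}|}{2}$ crossings both follow from the hypothesis that two curves meet at most once, and then either $|E|<4|\mathcal{P}|$ or the crossing inequality $|E|^3/(64|\mathcal{P}|^2)\leq\mathrm{cr}(G)\leq|\mathcal{L}|^2/2$ gives $|E|\leq 32^{1/3}(|\mathcal{P}||\mathcal{L}|)^{2/3}\leq 4(|\mathcal{P}||\mathcal{L}|)^{2/3}$; the arithmetic and the constants all check out against the stated bound. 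Two small points are worth making explicit. First, the step ``edges from a single curve do not cross each other'' and the very notion of ``consecutive points along $l$'' require each curve to be a simple arc (homeomorphic to a line or circle); the theorem as stated does not literally say this, but it is automatic for the curves actually used in the paper, namely translates $G(f)+(\alpha,\beta)$ of the graph of a continuous function. Second, the crossing-number inequality in the form $\mathrm{cr}(G)\geq|E|^3/(64|V|^2)$ needs $G$ simple, which is exactly where the single-intersection hypothesis enters, as you correctly note. Your proof has the additional merit of making the theorem self-contained at the level of generality (pseudoline-type families) that the paper needs.
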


\textbf{Remark}. While this paper was in the process of being
drafted, a very similar result to the following lemma was included
in a paper of Schoen and Shkredov (\cite{SSlong}, Lemma 24) which
was posted on the arXiv. See their paper for an alternative
description of this result and proof. Note also that a weaker
version of this result was also proven in \cite{Liconvex}.

\begin{lemma}\label{lemma:main}
Let $f$ be a continuous, strictly convex
 or concave function on the reals, and $A,B,C\subset\mathbb{R}$ be finite sets
 such that $|B||C|\gg{|A|^2}$. Then for all $\tau\geq{1}$,
\begin{equation}
\label{ST1}
\big|\{x:\sigma_{f(A),C}(x)\geq\tau\}\big|\ll\frac{|A+B|^2|C|^2}{|B|\tau^3},
\end{equation}
and
\begin{equation}
\label{ST2}
\big|\{y:\sigma_{A,B}(y)\geq\tau\}\big|\ll\frac{|f(A)+C|^2|B|^2}{|C|\tau^3}.
\end{equation}

\end{lemma}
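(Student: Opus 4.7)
My plan is to realise each of the two inequalities as a Szemer\'edi--Trotter incidence estimate for a carefully chosen family of curves that form a pseudo-line arrangement (any two meet at most once). The strict convexity (or concavity) of $f$ is invoked precisely to enforce this pseudo-line property.

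For \eqref{ST1}, set $S := \{x : \sigma_{f(A),C}(x) \geq \tau\}$. I would take the point set to be $\mathcal{P} := (A+B) \times C$ and, for each pair $(b,x) \in B \times S$, introduce the curve
$$\gamma_{b,x} := \{(u,\, x - f(u-b)) : u \in \mathbb{R}\}.$$
The key bookkeeping: if $x = f(a) + c$ is any of the $\geq \tau$ representations of $x$, then taking $u = a+b$ shows that the point $(a+b,c) \in \mathcal{P}$ lies on $\gamma_{b,x}$. Summing over $b \in B$ and over the representations of each $x \in S$ produces at least $|B||S|\tau$ incidences. To verify the pseudo-line property, observe that if $b_1 \neq b_2$ then the map $u \mapsto f(u-b_1) - f(u-b_2)$ is strictly monotonic (for continuous strictly convex $f$ this is the standard fact that $x \mapsto f(x+h) - f(x)$ is strictly increasing in $x$ for every fixed $h > 0$), so the equation defining an intersection of $\gamma_{b_1,x_1}$ and $\gamma_{b_2,x_2}$ has at most one solution. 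The Szemer\'edi--Trotter theorem then gives
$$|B||S|\tau \ll \bigl(|A+B||C|\cdot|B||S|\bigr)^{2/3} + |A+B||C| + |B||S|,$$
and isolating the leading term yields \eqref{ST1}.

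For \eqref{ST2}, I would run a symmetric construction with $T := \{y : \sigma_{A,B}(y) \geq \tau\}$. Take $\mathcal{P}' := (f(A)+C) \times B$ and, for each $(c,y) \in C \times T$, introduce the curve $\{(f(y-v)+c,\,v) : v \in \mathbb{R}\}$. A representation $y = a+b$ places the point $(f(a)+c,\, b) \in \mathcal{P}'$ on this curve, giving $\geq|C||T|\tau$ incidences. The same monotonicity argument confirms the pseudo-line property, and Szemer\'edi--Trotter then delivers \eqref{ST2}.

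The one non-cosmetic technical point will be discarding the two lower-order Szemer\'edi--Trotter terms. Using the trivial estimates $\tau \leq \min(|A|,|C|)$ and $|A+B| \geq \max(|A|,|B|)$ (together with the analogous bounds needed for the second inequality), combined with the standing hypothesis $|B||C| \gg |A|^2$, the $|\mathcal{P}|$ and $|\mathcal{L}|$ terms are each dominated by the $(|\mathcal{P}||\mathcal{L}|)^{2/3}$ term, so the leading contribution really does control the incidence count. This case analysis is the only place where the hypothesis $|B||C|\gg|A|^2$ is used, and it is the main obstacle in the sense that one must be slightly careful to confirm that none of the degenerate regimes can sabotage the rearrangement.
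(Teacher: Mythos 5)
Your proof is correct, but it takes a genuinely different route from the paper's. The paper takes the point set $(A+B)\times(f(A)+C)$ and the family of $|B||C|$ translates $G(f)+(b,c)$, uses Szemer\'edi--Trotter to bound the number of $\tau$-rich \emph{points} $\mathcal{P}_\tau$ by $|B|^2|C|^2/\tau^3$, and then needs a second, purely combinatorial step (the weight functions $\mathcal{M}_x(s)=\sum_i\chi_{a_i+B}(s)$, a popularity threshold $M=\tau|B|/(2|A+B|)$, and a dyadic decomposition) to convert that rich-point bound into a bound on the number of popular sums. You instead encode the popular set $S$ directly into the curve family: your $|B||S|$ curves $\gamma_{b,x}$ (translates of $G(-f)$, so still a pseudo-line family by the same monotonicity of $u\mapsto f(u-b_1)-f(u-b_2)$) each carry $\geq\tau$ guaranteed incidences with the smaller point set $(A+B)\times C$, so a single application of Szemer\'edi--Trotter plus a case analysis on the error terms finishes the job; this is the classical Elekes-style double count. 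Both arguments land on exactly the same exponents. Your version is shorter, and in fact the hypothesis $|B||C|\gg|A|^2$ is not needed for it at all: in the regime where the $|\mathcal{L}|$ term dominates one has $\tau=O(1)$ and the bound follows from $|S|\leq|A||C|$ and $|A+B|^2\geq|A||B|$, while the $|\mathcal{P}|$ term is handled by $\tau^2\leq|A||C|\leq|A+B||C|$ --- whereas the paper genuinely uses $|B||C|\gg|A|^2$ to dispose of the small-$\tau$ case of its rich-points claim. What the paper's two-step formulation buys is the reusable intermediate estimate $|\mathcal{P}_\tau|\ll|B|^2|C|^2/\tau^3$ and consistency with the Schoen--Shkredov framework it is building on. One small imprecision in your write-up: the two lower-order Szemer\'edi--Trotter terms are not always \emph{dominated} by the $(|\mathcal{P}||\mathcal{L}|)^{2/3}$ term (the $|\mathcal{L}|$ term wins whenever $\tau=O(1)$); the correct statement is that in any regime where a lower-order term dominates, the claimed inequality holds trivially by the elementary bounds you list, and you should also record that distinct pairs $(b,x)$ give distinct curves (again by strict convexity), so the incidence count $\geq\tau|B||S|$ is not inflated by coincident curves.
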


\begin{proof}

 Let $G(f)$ denote the graph of $f$ in the plane.
 For any
$(\alpha,\beta)\in\mathbb{R}^2$, put
$L_{\alpha,\beta}=G(f)+(\alpha,\beta)$. Define  a set of points
$\mathcal P=(A+B)\times(f(A)+C)$, and a set of curves $\mathcal
L=\{L_{b,c}: (b,c)\in B\times C\}$. By convexity or concavity,
$|\mathcal L|=|B||C|$, and any pair of curves from $\mathcal L$
intersect at most once.
 Let ${\mathcal P}_{\tau}$ be the set of
points of $\mathcal P$ belonging to at least $\tau$ curves from
$\mathcal L$. Applying the aforementioned Szemer\'{e}di-Trotter
theorem to ${\mathcal P}_{\tau}$ and $\mathcal L$,
\[\tau|{\mathcal P}_{\tau}|\leq 4(|{\mathcal P}_{\tau}||B||C|)^{2/3}+4|{\mathcal P}_{\tau}|+|B||C|.\]
Now we claim for any $\tau>0$ one has
\begin{equation}\label{formula 41}|{\mathcal
P}_{\tau}|\ll\frac{|B|^2|C|^2}{\tau^3}.\end{equation} The reason is
as follows. Firstly, since there is no point of $\mathcal P$
belonging to at least $\min\{|B|+1,|C|+1\}$ curves from $\mathcal
L$, to prove (\ref{formula 41}) we may assume that
$\tau\leq\sqrt{|B||C|}$. Secondly, if $\tau< 8$, then (\ref{formula
41}) holds true since
\[|{\mathcal
P}_{\tau}|\leq|{\mathcal
P}|=|(A+B)\times(f(A)+C)|\leq|A|^2|B||C|\ll|B|^2|C|^2\leq64\frac{|B|^2|C|^2}{\tau^2}.\]
Finally, we may assume that $8\leq\tau\leq\sqrt{|B||C|}$. In this
case we have \[\frac{\tau|{\mathcal P}_{\tau}|}{2}\leq4(|{\mathcal
P}_{\tau}||B||C|)^{2/3}+|B||C|.\] Thus
\[|{\mathcal
P}_{\tau}|\ll\max\{\frac{|B|^2|C|^2}{\tau^3},\frac{|B||C|}{\tau}\}=\frac{|B|^2|C|^2}{\tau^3}.\]
This proves the claim (\ref{formula 41}).

Next, suppose $\sigma_{f(A),C}(x)\geq\tau$.
 There exist
 $\tau$ distinct elements $\{a_i\}_{i=1}^{\tau}$ from $A$, $\tau$
distinct elements $\{c_i\}_{i=1}^{\tau}$ from $C$, such that
$x=f(a_i)+c_i\ (\forall i).$ Now we define $B_i\triangleq a_i+B \
(\forall i)$ and ${\mathcal
M}_x(s)\triangleq\sum_{i=1}^{\tau}\chi_{B_i}(s)$, where
$\chi_{B_i}(\cdot)$ is the characteristic function of $B_i$. Since
\[(a_i+b,x)=(a_i+b,f(a_i)+c_i)=\big(a_i,f(a_i)\big)+(b,c_i)\in L_{b,c_i}\ \ (\forall b, \forall i),\]
we have $(s,x)\in {\mathcal P}_{{\mathcal M}_x(s)}$. Note also
\[\sum_{s\in A+B}{\mathcal M}_x(s)=\sum_{i=1}^{\tau}\sum_{s\in A+B}\chi_{B_i}(s)=\tau|B|.\]
Let $M\triangleq\frac{\tau|B|}{2|A+B|}$. Then
$$\sum_{s\in A+B:\mathcal{M}_x(s)<M}\mathcal{M}_x(s)<|A+B|M=\frac{\tau|B|}{2},$$
and hence \[\sum_{s\in A+B: {\mathcal M}_x(s)\geq M }{\mathcal
M}_x(s)\geq\frac{\tau |B|}{2}.\] Dyadically decompose this sum, so
that
\begin{equation}
\sum_{j}X_j(x)\gg{\tau|B|},
\label{dyadicdyadic}
\end{equation}
where
\begin{align*}X_j(x)&\triangleq\sum_{s:M2^j\leq {\mathcal
M}_x(s)<M2^{j+1}}{\mathcal M}_x(s),\\
Y_j(x)&\triangleq\big|\big\{s\in A+B:M2^j\leq {\mathcal
M}_x(s)<M2^{j+1}\big\}\big|.\end{align*}
 By (\ref{formula 41}),
\[\sum_{x:\sigma_{f(A), C}(x)\geq\tau}Y_j(x)\leq|{\mathcal P}_{M2^j}|\ll\frac{|B|^2|C|^2}{M^32^{3j}}.\]
Noting that $ X_j(x)\approx Y_j(x)M2^j$, thus
\[\sum_{x:\sigma_{f(A), C}(x)\geq\tau}X_j(x)\ll\frac{|B|^2|C|^2}{M^22^{2j}},\]
which followed by first summing all $j$'s, then applying
(\ref{dyadicdyadic}), gives
\[\tau|B|\cdot\big|\{x:\sigma_{f(A),C}(x)\geq\tau\}\big|\ll\frac{|B|^2|C|^2}{M^2}.\]
Equivalently,
\[\big|\{x:\sigma_{f(A),C}(x)\geq\tau\}\big|\ll\frac{|A+B|^2|C|^2}{|B|\tau^3}.\]
This finishes the proof of (\ref{ST1}).

In the same way one can prove (\ref{ST2}). We only sketch the proof
as follows and leave the details to the interested readers: Suppose
$\sigma_{A,B}(y)\geq\tau$.
 There exist
 $\tau$ distinct elements $\{a_i\}_{i=1}^{\tau}$ from $A$, $\tau$
distinct elements $\{b_i\}_{i=1}^{\tau}$ from $B$, such that
$y=a_i+b_i.$ Then we define $C_i\triangleq f(a_i)+C$ and ${\mathcal
M}_y(s)\triangleq\sum_{i=1}^{\tau}\chi_{C_i}(s)$, and as before,
$(y,s)\in {\mathcal P}_{{\mathcal M}_y(s)}$. In precisely the same way as the
proof of (\ref{ST1}), one can prove that
\begin{align*}
\sum_{s\in f(A)+C: {\mathcal M}_y(s)\geq M }{\mathcal
M}_y(s)&\geq\frac{\tau |C|}{2},\\
\sum_{y:\sigma_{A,B}(y)\geq\tau}Y_j(y)\leq|{\mathcal
P}_{M2^j}|&\ll\frac{|B|^2|C|^2}{M^32^{3j}},\\
\sum_{y:\sigma_{A,B}(y)\geq\tau}X_j(y)&\ll\frac{|B|^2|C|^2}{M^22^{2j}},\\
\tau|C|\cdot\big|\{y:\sigma_{A,B}(y)\geq\tau\}\big|&\ll\frac{|B|^2|C|^2}{M^2},\\
\big|\{y:\sigma_{A,B}(y)\geq\tau\}\big|&\ll\frac{|f(A)+C|^2|B|^2}{|C|\tau^3},
\end{align*}
 where
$M\triangleq\frac{\tau|C|}{2|f(A)+C|}$,
$X_j(y)\triangleq\sum_{s:M2^j\leq {\mathcal
M}_y(s)<M2^{j+1}}{\mathcal M}_y(s)$,
$Y_j(y)\triangleq\big|\big\{s\in f(A)+C:M2^j\leq {\mathcal
M}_y(s)<M2^{j+1}\big\}\big|.$ This finishes the whole proof.
\end{proof}

\begin{corollary}\label{E3A}
Let $f$ be a continuous, strictly convex
 or concave function on the reals, and $A,C,F\subset\mathbb{R}$ be finite sets
 such that $|A|\approx|C|\ll|F|$. Then
\begin{align}
E(A,A)&\ll E_{1.5}(A)^{2/3}|f(A)+C|^{2/3}|A|^{1/3},\label{99}\\
E(A,F)&\ll |f(A)+C||F|^{3/2}, \label{1010}\\
E_3(A)&\ll|f(A)+C|^2|A|\log|A|,\label{1111}\\
E(f(A),f(A))&\ll E_{1.5}(f(A))^{2/3}|A+C|^{2/3}|A|^{1/3},\label{1212}\\
 E(f(A),F)&\ll
|A+C||F|^{3/2}, \label{1313}\\
E_3(f(A))&\ll|A+C|^2|A|\log|A|.\label{1414}
\end{align}
\end{corollary}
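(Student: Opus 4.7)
The plan is to derive all six inequalities from Lemma~\ref{lemma:main} by dyadic decomposition of the relevant energy, combined with a low-multiplicity bound and, where needed, a balancing of a threshold $T$. Inequalities \eqref{99}--\eqref{1111} will follow from \eqref{ST2}, while \eqref{1212}--\eqref{1414} will follow from \eqref{ST1} by the analogous argument with the roles of $A$ and $f(A)$ swapped. I will also use the lemma in its difference form: replacing $B$ by $-B$ in \eqref{ST2} (respectively $C$ by $-C$ in \eqref{ST1}) gives the same bound for $\delta_{A,B}$ in place of $\sigma_{A,B}$, since $\sigma_{A,-B}=\delta_{A,B}$ and strict convexity is preserved under reflection.

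For \eqref{1111}, apply the difference form of \eqref{ST2} with $B=A$ (so $|B||C|\gg|A|^{2}$ holds) to obtain $|\{s:\delta_{A}(s)\geq\tau\}|\ll|f(A)+C|^{2}|A|/\tau^{3}$. Since $\delta_{A}(s)\leq|A|$, dyadically decomposing $E_{3}(A)=\sum_{s}\delta_{A}(s)^{3}$ produces $O(\log|A|)$ scales, each contributing $\ll|f(A)+C|^{2}|A|$, and summing yields \eqref{1111}. The same strategy, using \eqref{ST1} with the lemma's $B$ identified with the corollary's $C$ and the lemma's $C$ taken to be $-f(A)$, gives \eqref{1414}. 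For \eqref{1010}, apply \eqref{ST2} with $B=F$ and split $E(A,F)=\sum_{s}\sigma_{A,F}(s)^{2}$ at threshold $T$: the low-multiplicity part is at most $T\sum_{s}\sigma_{A,F}(s)=T|A||F|$, while the high part is bounded by the geometric dyadic sum $\ll|f(A)+C|^{2}|F|^{2}/(|A|T)$. Choosing $T\approx|f(A)+C||F|^{1/2}/|A|$ balances the two and yields \eqref{1010}; inequality \eqref{1313} is identical via \eqref{ST1}.

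For \eqref{99} (and \eqref{1212} by the analogous argument via \eqref{ST1}), split $E(A,A)=\sum_{s}\delta_{A}(s)^{2}$ at threshold $T$, using $E_{1.5}$ on the low side: when $\delta_{A}(s)<T$ one has $\delta_{A}(s)^{2}\leq T^{1/2}\delta_{A}(s)^{3/2}$, so the low part is $\leq T^{1/2}E_{1.5}(A)$, while the high part is again $\ll|f(A)+C|^{2}|A|/T$. Balancing at $T\approx(|f(A)+C|^{2}|A|/E_{1.5}(A))^{2/3}$ yields \eqref{99} without any logarithmic factor, because the high-multiplicity dyadic sum is geometric in $2^{-j}$ and is thus comparable to its smallest term (whereas for $E_3$ one sums a constant across $O(\log|A|)$ scales, producing the logarithm). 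The calculations are routine; the only care needed is in deciding whether \eqref{ST1} or \eqref{ST2} applies and in matching the corollary's sets $A,C,F$ to the lemma's sets $A,B,C$ so that the desired factor $|f(A)+C|$ or $|A+C|$ appears on the right-hand side.
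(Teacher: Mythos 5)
Your proposal is correct and follows essentially the same route as the paper: each inequality is obtained from Lemma~\ref{lemma:main} by a dyadic decomposition of the relevant energy into a low-multiplicity part (bounded via $E_{1.5}$, via $E_1=|A||F|$, or absent in the $E_3$ case) and a high-multiplicity part controlled by the level-set bound, followed by optimizing the threshold; your observation that the logarithm appears only for $E_3$ because that dyadic sum is not geometric matches the paper's computation exactly. The only cosmetic difference is that the paper works with $\delta$ throughout by substituting $B=-A$ or $B=-F$ directly, whereas you phrase the same substitution as a ``difference form'' of the lemma; your aside about convexity being preserved under reflection is unnecessary (only the translate sets are negated, not $f$) but harmless.
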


\begin{proof} Let $\triangle>0$ be an arbitrary real number.
First decomposing $E(A)$, then applying Lemma \ref{lemma:main} with
$B=-A$, gives
\begin{align*}
E(A,A)&=\sum_{s:\delta_A(s)<{\triangle}}\delta_A(s)^2+\sum_{j=0}^{\lfloor\log|A|\rfloor}\sum_{s:2^j\triangle\leq\delta_A(s)<2^{j+1}\triangle}\delta_A(s)^2
\\&\ll{\sqrt{\triangle}\cdot E_{1.5}(A)+\sum_{j=0}^{\lfloor\log|A|\rfloor}\frac{|f(A)+C|^2|A|}{2^{3j}\triangle^{3j}}}\cdot2^{2j}\triangle^{2j}\\
&\ll \sqrt{\triangle}\cdot
E_{1.5}(A)+\frac{|f(A)+C|^2|A|}{\triangle}\ \ \ \
\big(\triangleq\Psi(\triangle)\big).
\end{align*}
Thus $E(A)\ll\min_{\triangle>0}\Psi(\triangle)\approx
E_{1.5}(A)^{2/3}|f(A)+C|^{2/3}|A|^{1/3}$, which proves (\ref{99}).
Similarly, applying Lemma \ref{lemma:main} with $B=-F$, gives
\begin{align*}
E(A,F)&=\sum_{s:\delta_{A,F}(s)<{\triangle}}\delta_{A,F}(s)^2+\sum_{j=0}^{\lfloor\log|A|\rfloor}\sum_{s:2^j\triangle\leq\delta_{A,F}(s)<2^{j+1}\triangle}\delta_{A,F}(s)^2
\\&\ll\triangle\cdot E_{1}(A,F)+\sum_{j=0}^{\lfloor\log|A|\rfloor}\frac{|f(A)+C|^2|F|^2}{|C|2^{3j}\triangle^{3j}}\cdot2^{2j}\triangle^{2j}\\
&\ll \triangle|A||F|+\frac{|f(A)+C|^2|F|^2}{|C|\triangle}\ \ \ \
\big(\triangleq\Phi(\triangle)\big).
\end{align*}
Thus $E(A,F)\ll\min_{\triangle>0}\Phi(\triangle)\approx
|f(A)+C||F|^{1.5}$, which proves (\ref{1010}). Once again applying
Lemma \ref{lemma:main}  with $B=-A$, gives
\begin{align*}
E_3(A)&=\sum_{j=0}^{\lfloor\log|A|\rfloor}\sum_{s:2^j\leq\delta_A(s)<2^{j+1}}\delta_A(s)^3\\
&\ll\sum_{j=0}^{\lfloor\log|A|\rfloor}\frac{|f(A)+C|^2|A|}{2^{3j}\triangle^{3j}}\cdot2^{3j}\triangle^{3j}\approx|f(A)+C|^2|A|\log|A|,\end{align*}
which proves (\ref{1111}). (\ref{1212})$\sim$(\ref{1414}) can be
established by the same way. This concludes the whole proof.
\end{proof}

\end{section}

\begin{section}{Proofs of the main results}
\begin{subsection}{Proof of Theorem \ref{theorem:main}}
First, apply H\"{o}lder's inequality as follows to bound $E_{1.5}(A)$ from below:
$$|A|^6=\left(\sum_{s\in{A-A}}\delta_A(s)\right)^3\leq{\left(\sum_{s\in{A-A}}\delta_A(s)^{1.5}\right)^2|A-A|}=E_{1.5}(A)^2|A-A|.$$
Therefore, using the above bound and Lemma \ref{E1.5} with $B=-A$
gives
\[
\frac{|A|^8}{|A-A|}\leq{E_{1.5}(A)^2|A|^2}\leq{E_3(A)E(A,A-A)}.
\]
Finally, apply (\ref{1111}), and (\ref{1010}) with $F=A-A$, to
conclude that
\[\frac{|A|^8}{|A-A|}\ll{|f(A)+C|^3|A-A|^{3/2}|A|\log|A|},\] and hence
\[|f(A)+C|^6|A-A|^5\gg{\frac{|A|^{14}}{(\log|A|)^2}},\] as required.
\end{subsection}

\begin{subsection}{Proof of Theorem \ref{theorem:main2}}
Using the standard Cauchy-Schwarz bound on the additive energy, and
then (\ref{99}), we see that
\begin{align*}
\frac{|A|^{12}}{|A+A|^3}&\leq{E(A,A)^3}
\\&\ll{E_{1.5}(A)^2|f(A)+C|^2|A|}
\\&=\left(\frac{|f(A)+C|^2}{|A|}\right)E_{1.5}(A)^2|A|^2.
\end{align*}
Next, apply Lemma \ref{E1.5}, with $B=A$, to get
\[\frac{|A|^{12}}{|A+A|^3}\ll\left(\frac{|f(A)+C|^2}{|A|}\right)E_3(A)E(A,A+A),\]
and then apply (\ref{1111}), and (\ref{1010}) with $F=A+A$, to get
\[\frac{|A|^{12}}{|A+A|^3}\ll{\frac{|f(A)+C|^2}{|A|}|f(A)+C|^3|A+A|^{3/2}|A|\log|A|},\]
which, after rearranging, gives
\[|f(A)+C|^{10}|A+A|^9\gg{\frac{|A|^{24}}{(\log|A|)^2}}.\]

\end{subsection}

\begin{subsection}{Proof of Theorem \ref{theorem:main3}}
Observe that the Cauchy-Schwarz inequality applied twice tells us
that
\[\frac{|A|^{24}}{|A+f(A)|^6}\leq{E(A,f(A))^6}\leq{E(A,A)^3E(f(A),f(A))^3},\]
so that after applying (\ref{99}) and (\ref{1212}), with either
$C=A$ or $C=f(A)$,
\begin{align*}
\frac{|A|^{26}}{|A+f(A)|^6}&\leq|A|^2\cdot{E_{1.5}(A)^2|A+f(A)|^2|A|\cdot
E_{1.5}(f(A))^2|A+f(A)|^2|A|}
\\&=(E_{1.5}(A)^2|f(A)|^2)\cdot(E_{1.5}(f(A))^2|A|^2)\cdot|A+f(A)|^4
\\&\leq{E_3(A)E_3(f(A))E(A,A+f(A))E(f(A),A+f(A))|A+f(A)|^4},
\end{align*}
where the the last inequality is a consequence of two applications
of Lemma \ref{E1.5}. Next apply (\ref{1111}) and (\ref{1414}), again
with either $C=A$ or $C=f(A)$, to get
\[\frac{|A|^{26}}{|A+f(A)|^6}\leq{|A+f(A)|^8|A|^2(\log|A|)^2E(A,A+f(A))E(f(A),A+f(A))}.\]
Finally, apply (\ref{1010}) and (\ref{1313}), still with either
$C=A$ or $C=f(A)$, so that
\[\frac{|A|^{26}}{|A+f(A)|^6}\leq{|A+f(A)|^{13}|A|^2(\log|A|)^2}.\]
Then, after rearranging, we get
\[|A+f(A)|\gg{\frac{|A|^{24/19}}{(\log|A|)^{2/19}}}.\]
\end{subsection}

\end{section}

\textbf{Acknowledgements.} This first listed author was supported by
the NSF of China (11001174). The second listed author would like to thank Misha Rudnev for many helpful conversations.

\bibliographystyle{plain}
\bibliography{reviewbibliography}

\end{document}